\newtheorem{lemma}{Lemma}
\newtheorem{prop}[lemma]{Proposition}
\newtheorem{rem}{Remark}
\newtheorem{conj}[lemma]{Conjecture}
\newenvironment{proof}[1][Proof]{\textbf{#1.} }{\ \rule{0.5em}{0.5em}}
\def\G{\Gamma}
\begin{document}
\date{}
\title{Complexity computation for compact 3-manifolds \\ via crystallizations and Heegaard diagrams}

\bigskip

 \author[*]{Maria Rita CASALI}  \author[*]{Paola CRISTOFORI}
 \author[**]{Michele MULAZZANI}
\affil[*]{Dipartimento di Matematica, Universit\`a di Modena e
Reggio Emilia} \affil[**]{Dipartimento di Matematica, Universit\`a
di Bologna}
\renewcommand\Authands{ and }

\maketitle

\begin{abstract} The idea of computing Matveev complexity by using Heegaard decompositions has been
recently developed by two different approaches: the first one for
closed 3-manifolds via crystallization theory, yielding the notion
of {\it Gem-Matveev complexity}; the other one for compact
orientable 3-manifolds via generalized Heegaard diagrams, yielding
the notion of {\it modified Heegaard complexity}. In this paper we
extend to the non-orientable case the definition of modified
Heegaard complexity and prove that for closed 3-manifolds
Gem-Matveev complexity and modified Heegaard complexity coincide.
Hence, they turn out to be useful different tools to compute the
same upper bound for Matveev complexity.

\end{abstract}

\bigskip

\small{

\thanks{

{\it 2000 Mathematics Subject Classification:} Primary 57M27,
57N10. Secondary 57M15.

\smallskip

{\it Key words and phrases:} complexity of 3-manifolds, Heegaard
diagrams, crystallizations.

}

\bigskip
\bigskip

\section{\hskip -0.7cm . Introduction}

In 1990 S. Matveev  proposed in \cite{[M$_1$]} to attack the
problem of studying systematically the whole set $\mathcal M$ of
compact 3-manifolds by choosing a suitable notion of {\it
complexity}, i.e. a non-negative function which filters $\mathcal
M$ and is able to {\it ``measure how complicated a combinatorial
description of the manifold must be"}. If the filtration has the
properties of finiteness (only a finite number of closed
irreducible 3-manifolds have a fixed complexity) and additivity
with respect to connected sum (the complexity of the connected sum
is the sum of the complexities of the summands), then it allows a
concrete catalogation of the elements of $\mathcal M$, via the
chosen combinatorial tool. In the same paper, Matveev introduced a
notion of complexity with the required properties, based on the
theory of simple spines (\cite{[M$_0$]} and \cite{[Pi]}).

We recall that a polyhedron $P$ embedded into a compact connected
3-manifold $M$ is called a \textit{spine} of  $M$ if $M$ (or $M$
minus an open 3-ball if $M$ is closed) collapses to $P$. Moreover,
a spine $S$ is said to be \textit{almost simple} if the link of
each point $x\in S$ can be embedded into  $K_4$, which is the
topological realization of the complete graph with four vertices.
A \textit{true vertex} of an almost simple spine $S$ is  a point
$x\in S$ whose link is homeomorphic to $K_4$.

The \textit{(Matveev) complexity} $c(M)$ of $M$ is defined as the
minimum number of true vertices among all almost simple spines of
$M$. The 3-sphere, the real projective space, the lens space
$L(3,1)$ and the spherical bundles $\mathbb S^1\times \mathbb S^2$
and $\mathbb S^1\tilde\times \mathbb S^2$ have complexity zero by
definition. Apart from these special cases, for a closed prime
manifold $M$, the complexity $c(M)$ turns out to be the minimum
number of tetrahedra needed to obtain $M$ via face paring of them
(\cite[Proposition 2]{[M$_1$]}, together with the related Remark).

During the last two decades, various authors produced tables of
closed 3-manifolds for increasing values of complexity, by simply
generating all triangulations (resp. spines) with a given number
of tetrahedra (resp. true vertices) and classifying topologically
the associated manifolds.  The obtained results concerning the
orientable (resp. non-orientable) case may be found in
\cite{[M$_1$]}, \cite{[O]}, \cite{[MP$_1$]}, \cite{[M$_3$]},
\cite{[M$_4$]}, \cite{[Ma]} and in the Web page
http://www.matlas.math.csu.ru/ (resp. in \cite{[B$_1$]},
\cite{[AM]}, \cite{[C$_4$]}, \cite{[AM_bis]} and
\cite[Appendix]{[B$_2$]}).

In general, the computation of the complexity of a given manifold
is a difficult problem (see \cite{[JRT1]} and \cite{[JRT2]} for
recent results). So, two-sided estimates of complexity become
important, especially when dealing with infinite families of
manifolds (see, for example, \cite{[M$_2$]}, \cite{[MPV]},
\cite{[PV]}). By \cite[Theorem 2.6.2]{[M$_2$]}, a lower bound  for
the complexity of a given manifold can be obtained from its first
homology group. Moreover, a lower bound for hyperbolic manifolds
can be obtained via volume arguments (see \cite{[M$_2$]},
\cite{[MPV]}, \cite{[PV]}). Upper bounds are easier to find, since
any pseudo-triangulation (or any spine) of $M$ obviously yields an
upper bound for $c(M)$.

The idea of computing Matveev complexity by using Heegaard
decompositions is already suggested in the foundational paper
\cite{[M$_1$]} by Matveev: from any Heegaard diagram $H = (S, v,w)$
of $M$, we can construct an almost simple spine of $M$ whose true
vertices are the intersection points of the curves of the two
systems $v$ and $w$, with the exception of those which lie on the
boundary of a region of $S-\{v \cup w\}$. In fact, the spine can
be obtained by adding to the surface $S$ the meridian disks
corresponding to the systems of curves and by removing the 2-disk
corresponding to an arbitrary region of $S-\{v \cup w\}$.

Starting from this idea, two different approaches to Matveev
complexity computation have been recently developed. The first
one, introduced in 2004 for closed 3-manifolds, is based on
crystallization theory; it has led to the notion of {\it
Gem-Matveev complexity}, $GM$-complexity for short (see
\cite{[C$_4$]}, together with subsequent papers \cite {[C$_5$]}
and \cite{[CC$_1$]}, or Section 3 of the present paper for a brief
account). Later, in 2010, the {\it modified Heegaard complexity}
($HM$-complexity) of a compact orientable 3-manifold has been
defined via generalized Heegaard diagrams (see \cite{[CMV]}). Both
invariants have been proved to be upper bounds for the Matveev
complexity.

>From the practical view-point, both $GM$-complexity and
$HM$-complexity have allowed to obtain estimations of complexity
for interesting classes of manifolds. In \cite{[C$_5$]}
$GM$-complexity has produced significant improvements in order to
estimate Matveev complexity for two-fold branched coverings of
$\mathbb S^3$, three-fold simple branched coverings of $\mathbb
S^3$ and 3-manifolds obtained by Dehn surgery on framed links in
$\mathbb S^3$. On the other hand, estimations for $n$-fold cyclic
coverings of $\mathbb S^3$ branched over 2-bridge knots and links,
torus knots and theta graphs, as well as for a wide class of
Seifert manifolds which generalize Neuwirth manifolds have been
obtained through $HM$-complexity in \cite{[CMV]}. Note also that,
in \cite{[C$_2$]}, $GM$-complexity has allowed us to complete the
classification of all non-orientable closed 3-manifold up to
complexity 6 (see \cite{[AM]} and \cite{[AM_bis]}).

The aim of the present paper is to extend the definition of modified
Heegaard complexity to the non-orientable case (Section 2), and to
prove that for each closed 3-manifold Gem-Matveev complexity and
modified Heegaard complexity coincide (Proposition 6).
Furthermore, experimental results concerning 3-manifolds admitting
a crystallization with ``few" vertices suggest 
equality between Matveev complexity and this upper bound, directly
computable via two apparently different methods for representing
3-manifolds (Conjecture 7).

\bigskip
\bigskip

\section{\hskip -0.7cm . Modified Heegaard complexity}

The notion of \textit{modified Heegaard complexity} for compact
orientable 3-manifolds (either with or without boundary) has been
introduced in \cite{[CMV]}, where a comparison with Matveev
complexity has been discussed. In this section we extend that
notion to the non-orientable case. In order to do that, some
preliminary definitions are required.

Let $\Sigma_g$ be either the closed, connected orientable surface of genus $g$ (with $g\ge 0$) or the closed,
connected non-orientable surface of genus $2g$ (with $g\ge 1$). So $\Sigma_g$ is the boundary of a
handlebody $\mathbb Y_g$ of genus $g$, $\mathbb Y_g$ being the orientable (resp. non orientable) 3-manifold
obtained from the $3$-ball $\mathbb D^3$ by adding $g$ orientable 1-handles (resp. $g$ 1-handles,
at least one of which is non-orientable).

A \textit{system of curves} on $\Sigma_g$ is a (possibly empty)
set of simple closed orientation-preserving\footnote{This means
that each curve $\gamma_i$ has an annular regular neighborhood, as
it always happens if $\Sigma_g$ is an orientable surface.} curves
$\mathcal C=\{\gamma_1,\ldots,\gamma_k\}$ on $\Sigma_g$ such that
$\gamma_i \cap \gamma_j = \emptyset$, for $1\le i\neq j\le k$.
Moreover, we denote with $V(\mathcal C)$ the set of connected
components of the surface obtained by cutting $\Sigma_g$ along the
curves of $\mathcal C$. The system $\mathcal C$ is said to be
\textit{proper} if all elements of $V(\mathcal C)$ have genus
zero, and \textit{reduced} if either $\vert V(\mathcal C)\vert =1$ 
or no element of $V(\mathcal C)$ has genus 0. Thus, $\mathcal
C$ is: (i) proper and reduced if and only if $V(\mathcal C)$ consists of one
element of genus $0$; (ii) non-proper and reduced if and only if
all elements of $V(\mathcal C)$ are of genus $>0$; (iii) proper and non-reduced
if and only if $V(\mathcal C)$ has more than one element and all of them are of
genus $0$; (iv) non-proper and non-reduced if and only if $V(\mathcal C)$ has
at least one element of genus $0$ and at least one element of
genus $>0$. Note that a proper reduced system of curves on
$\Sigma_g$ contains exactly $g$ curves.

We denote by $G(\mathcal C)$ the graph which is dual to the one
determined by $\mathcal C$ on $\Sigma_g$. Thus, vertices of $G(\mathcal
C)$ correspond to elements of $V(\mathcal C)$ and edges correspond to
curves of $\mathcal C$. Note that loops and multiple edges may arise
in~$G(\mathcal C)$.

A \textit{compression body} $K_g$ of genus $g$ is a 3-manifold with
boundary obtained from $\Sigma_g\times [0,1]$ by attaching a finite
set of 2-handles $Y_1,\ldots, Y_k$ along a system of  curves (called
\textit{attaching circles}) on $\Sigma_g\times\{0\}$ and filling in
with balls all the spherical boundary components of  the resulting
manifold, except $\Sigma_g\times\{1\}$ when $g=0.$
Moreover, $\partial_+ K_g=\Sigma_g\times\{1\}$ is called the
\textit{positive} boundary of $K_g$, while $\partial_- K_g =
\partial K_g-\partial_+ K_g$ is called the \textit{negative}
boundary of $K_g$. Notice that a compression body is a handlebody
if an only if $\partial_- K_g = \emptyset$, i.e., the system of
the attaching circles on $\Sigma_g\times\{0\}$ is proper.
Obviously homeomorphic compression bodies  can be obtained via
(infinitely many) non isotopic systems of attaching circles.

If a system of attaching circles $\mathcal C$ is not reduced, then it
contains at least one reduced subsystem of curves  determining the
same compression body $K_g$. Indeed, let  $V^+(\mathcal C)$ be the set of
vertices of $G(\mathcal C)$ corresponding to the components with genus
greater then zero, and $\mathcal A(\mathcal C)$ be the set
consisting of  all the graphs $T_i$ such that:
\begin{itemize}
\item $T_i$ is a subgraph of $G(\mathcal C)$;
\item if $V^+(\mathcal C)=\emptyset$ then $T_i$ is a maximal tree in
$G(\mathcal C)$;
\item if $V^+(\mathcal C)\ne \emptyset$ then $T_i$ contains all the
vertices of $G(\mathcal C)$ and each component of $T_i$ is a tree
containing exactly one vertex of $V^+(\mathcal C)$.
\end{itemize}
Then, for any  $T_i \in \mathcal A(\mathcal C)$, the system of
curves obtained by removing from $\mathcal C$ the curves
corresponding to the edges of $T_i$ is reduced and determines the
same compression body. Note that this operation corresponds to
removing complementary 2- and 3-handles. Moreover, if  $\partial_-
K_g$ is orientable (resp. non-orientable) and has $h$ boundary
components with genus $g_j$ (resp. $2g_j$), $1\le j \le h$,
then\footnote{The formula corrects a misprint contained in
\cite{[CMV]}.}
$$
\vert E(T_i)\vert = \vert\mathcal C\vert - g - \max\{0,h-1\} +
\sum_{j=1}^h g_j
$$
for each $T_i\in\mathcal A(\mathcal C)$, where $E(T_i)$ denotes the edge
set of
$T_i$.

Let $M$ be  a compact, connected
3-manifold without spherical boundary components. A
\textit{Heegaard surface} for $M$ is a surface $\Sigma_g$
embedded in $M$ such that $M-\Sigma_g$ consists of two components
whose closures $K'$ and $K''$ are (homeomorphic to) a genus $g$
handlebody and  a genus $g$ compression body, respectively.

The triple $(\Sigma_g, K',K'')$ is called a
\textit{Heegaard splitting} of genus $g$ of $M$. It is a well
known fact that each compact connected 3-manifold without
spherical boundary components admits a Heegaard splitting.

\begin{rem}
\textup{By Proposition 2.1.5 of \cite{[Ma]}, the complexity of a
manifold is not affected by puncturing it. So, in order to compute
complexity, there is no loss of generality to assume that the manifold
has no spherical boundary components.}
\end{rem}

On the other hand, a triple $H=(\Sigma_g,
\mathcal C',\mathcal C'')$, where  $\mathcal C'$ and
$\mathcal C''$ are two systems of curves on $\Sigma_g$, such that
they intersect transversally and $\mathcal C'$ is proper, uniquely
determines a 3-manifold $M_H$ corresponding to the Heegaard
splitting $(\Sigma_g, K',K'')$, where $K'$ and $K''$ are respectively
the handlebody and the compression body whose attaching circles
correspond to the curves in the two systems. Such a triple is called a
\textit{generalized Heegaard diagram}  for $M_H$.

In the case of closed 3-manifolds, both systems of curves of a generalized Heegaard diagram $H$ are obviously proper; if they are also reduced, $H$ is simply a Heegaard diagram in the classical sense (see \cite{[He]}).

For each generalized Heegaard diagram  $H=(\Sigma_g,
\mathcal C',\mathcal C'')$, we denote by $\Delta(H)$  the graph embedded in $\Sigma_g$ defined
by the curves of $\mathcal C'\cup \mathcal C''$, and by $\mathcal
R(H)$ the set of regions of $\Sigma_g-\Delta(H)$. Note that $\Delta(H)$ may have connected components which are circles.
All vertices not belonging in these components are 4-valent and they are called \textit{singular}
vertices.  A diagram $H$ is called \textit{reduced Heegaard diagram}
if both the systems of curves are reduced. If $H$ is non-reduced,
then we denote by $\textup{Rd}(H)$ the set of reduced Heegaard
diagrams obtained from $H$ by reducing the two systems of curves.

The modified complexity of a reduced Heegaard diagram  $H'$ is
$$\widetilde{c}(H')=c(H')-\max\,\{n(R)\mid R\in\mathcal R(H')\},$$ where $c(H')$ is the number of singular vertices of $\Delta(H')$ and $n(R)$ denotes the number of singular vertices contained in the
region $R$; while the modified complexity of a (non-reduced) generalized Heegaard
diagram $H$ is $$\widetilde{c}(H)=\min\,\{\tilde{c}(H')\mid
H'\in\textup{Rd}(H)\}.$$

We define the \textit{modified Heegaard complexity} of a compact
connected 3-manifold $M$ as
$$
c_{HM}(M) = \min\,\{\widetilde{c}(H) \mid H
\in\mathcal{H}(M)\} ,
$$
where $\mathcal{H}(M)$ is the set of all generalized Heegaard diagrams of $M$.

\medskip

The significance of  modified Heegaard complexity consists in its
relation with Matveev complexity $c(M)$:
\begin{prop}
If $M$ is a compact connected 3-manifold, then
$$
c(M) \leqslant c_{HM}(M) .
$$
\end{prop}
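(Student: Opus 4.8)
The plan is to reduce the statement to the construction already sketched in the introduction: from a (generalized) Heegaard diagram $H=(\Sigma_g,\mathcal C',\mathcal C'')$ of $M$ one builds an almost simple spine of $M$ by taking the surface $\Sigma_g$, attaching the meridian disks corresponding to the curves of the two systems, and then deleting the open 2-disk corresponding to one region $R\in\mathcal R(H)$. First I would check that this polyhedron is indeed a spine of $M$ (or of $M$ minus an open ball when $M$ is closed): the handlebody $K'$ collapses onto $\Sigma_g$ together with the meridian disks of $\mathcal C'$, and likewise the compression body $K''$ collapses onto $\Sigma_g$ together with the disks of $\mathcal C''$ (the ball fillings being absorbed harmlessly), so the union of the two collapses, minus a regular ball neighbourhood of a point interior to the chosen region, retracts onto the described 2-polyhedron. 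Then I would verify that it is almost simple — links of points are at most $K_4$ — which is routine since the only points with complicated link are the transversal intersection points of $\mathcal C'$ with $\mathcal C''$, whose links are exactly $K_4$, i.e. these are the \emph{true vertices}.

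The key counting step is then that the number of true vertices of this spine equals the number of singular vertices of $\Delta(H)$ that do \emph{not} lie on the boundary of the deleted region $R$; the vertices on $\partial R$ cease to be true vertices because one of the four local sheets has been removed. Choosing $R$ to be a region containing the maximal number $n(R)$ of singular vertices, we obtain a spine with exactly $c(H)-\max\{n(R)\mid R\in\mathcal R(H)\}$ true vertices when $H$ is reduced, and hence $c(M)\le \widetilde c(H')$ for every reduced diagram $H'$. For a non-reduced generalized diagram $H$ one passes to any $H'\in\mathrm{Rd}(H)$ — reducing corresponds to removing complementary 2- and 3-handles, hence does not change $M$ — so $c(M)\le \widetilde c(H')$ for all such $H'$, giving $c(M)\le\widetilde c(H)$. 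Taking the minimum over all $H\in\mathcal H(M)$ yields $c(M)\le c_{HM}(M)$.

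One technical point requiring care, and the step I expect to be the main obstacle, is handling the non-reduced case and the circle components of $\Delta(H)$ uniformly: when a system of curves is non-proper or non-reduced the corresponding ``meridian disks'' no longer cut the (compression) body into balls, so the naive polyhedron need not be a spine. This is precisely why the modified complexity is defined via the reduced representatives in $\mathrm{Rd}(H)$, and I would argue that it suffices to treat a reduced diagram directly and invoke the fact — recalled in Section 2 — that passing from $H$ to an element of $\mathrm{Rd}(H)$ preserves the underlying manifold. A second, more cosmetic subtlety is the closed versus bounded case: when $M$ is closed the spine is a spine of $M$ minus an open ball, which is the content of the definition of $c(M)$; when $\partial M\ne\emptyset$ the disks of $\mathcal C''$ corresponding to curves bounding in $\partial_-K''$ must be included so that the polyhedron still collapses correctly. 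Both cases are accommodated by the construction above, so no separate argument is needed beyond remarking on it.
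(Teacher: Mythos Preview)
Your proposal is correct and follows exactly the spine-from-Heegaard-diagram construction that the paper itself outlines in the Introduction; the paper's own ``proof'' of this proposition is simply a citation of \cite{[CMV]} for the orientable case together with the remark that the same argument goes through verbatim in the non-orientable setting. In other words, you have written out in detail precisely the argument the paper is content to quote.
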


\begin{proof}
The result has been proved in \cite{[CMV]} for compact orientable
manifolds, but the proof works exactly in the same way also for
non-orientable ones.
\end{proof}

\bigskip
\bigskip

\section{\hskip -0.7cm . Crystallizations and GM-complexity}\label{preliminari cGM}

The present section is devoted to briefly review some basic
notions of the representation theory of PL-manifolds by
crystallizations; in particular, we focus on definitions and
results (due to \cite{[C$_4$]}, \cite{[C$_5$]} and
\cite{[CC$_1$]}) concerning the possibility of obtaining an upper
bound for Matveev complexity of a closed 3-manifold $M$ by means
of the edge-coloured graphs representing $M$.

For general PL-topology and elementary notions about graphs and embeddings, we
refer to \cite{[HW]} and \cite{[Wh]} respectively.

Crystallization theory is a representation tool for general piecewise linear (PL) compact manifolds, without
assumptions about dimension, connectedness, orientability or boundary properties  (see the survey papers \cite{[FGG]}, \cite{[BCG]}
and \cite{[BCCGM]}).
However, since this paper concerns only $3$-manifolds, we will restrict definitions and results to dimension $3$, although they
mostly hold for the general case ($n\ge1$); moreover,
from now on all manifolds will be assumed to be closed and connected.

Given a pseudocomplex $K$, triangulating a $3$-manifold
$M$, a \textit{coloration} on $K$ is a labelling of its vertices
by $\Delta_3=\{0,1,2,3\}$, which is injective on each simplex
of $K$.
The dual 1-skeleton of $K$ is a (multi)graph $\Gamma=(V(\Gamma),E(\Gamma))$ embedded in
$|K|=M$; we can define a map $\gamma :
E(\Gamma)\to\Delta_3$ in the following way: $\gamma(e)=c\ $ iff
the vertices of the face dual to $e$ are coloured by
$\Delta_3-\{c\}.$
The map $\gamma$ - which is injective on each pair of adjacent edges of
the graph - is called an \textit{edge-coloration} on $\Gamma,$
while the pair $(\Gamma,\gamma)$ is called a \textit{4-coloured
graph representing M} or simply a \textit{gem} (where ``gem" stands for  \textit{graph encoded
manifold}: see \cite{[Li]}).
In order to avoid long notations, in the following we will often
omit the edge-coloration when it is not necessary, and we will simply
write $\Gamma$ instead of $(\Gamma,\gamma)$.

It is easy to see that any 3-manifold $M$ has a gem inducing it: just take the
barycentric subdivision $H^{\prime}$ of any pseudocomplex $H$ triangulating $M$,
label any vertex of $H^{\prime}$ with the dimension of the open simplex containing it
in $H$, and construct the associated 4-coloured graph as described above.
Conversely, starting from $\Gamma$, we can always
reconstruct $K(\Gamma)=K$ and hence the manifold $M$ (see
\cite{[FGG]} and \cite{[BCG]} for more details).

Given $i,j\in\Delta_3$, $i\neq j$, we denote by $(\Gamma_{i,j},\gamma_{i,j})$
the $2$-coloured graph obtained from $\Gamma$ by deleting all edges which are not $i$- or
$j$-coloured; hence, $\Gamma_{i,j}=(V(\Gamma),$
$\gamma^{-1}(\{i,j\}))$ and
$\gamma_{i,j}=\gamma_{|_{\gamma^{-1}(\{i,j\})}}.$
The connected components of $\Gamma_{i,j}$ will be
called \textit{$\{i,j\}$-residues} of $\Gamma$ and their number
will be denoted by $g_{i,j}$.
As a consequence of the definition, a bijection is established
between the set of $\{i,j\}$-residues of $\Gamma$
and the set of 1-simplices of $K(\Gamma)$ whose endpoints
are labelled by $\Delta_3-\{i,j\}$.
Moreover, for each $c\in\Delta_3$, the connected components of the
3-coloured graph $\Gamma_{\hat c}$ obtained from $\Gamma$ by
deleting all $c$-coloured edges are in bijective correspondence
with the $c$-coloured vertices of $K(\Gamma)$; their number will be denoted by $g_{\hat c}.$
We will call $\Gamma$ \textit{contracted} iff $\Gamma_{\hat c}$ is connected
for each $c\in\Delta_3$, i.e. iff $K(\Gamma)$ has exactly four
vertices.

A contracted 4-coloured graph representing a $3$-manifold $M$ is called a
\textit{crystallization} of $M$.
It is well-known that every 3-manifold admits a crystallization (see \cite{[FGG]}, together with its references).
Any crystallization (or more generally any gem) $\Gamma$ of $M$ encodes in a combinatorial way the topological properties
of $M$. For example, it is very easy to check that $M$ is orientable iff
$\Gamma$ is bipartite.

\medskip

Relations among crystallization theory and other classical
representation methods for PL manifolds have been widely analyzed
(see \cite[Sections 3, 6, 7]{[BCG]}). In particular, for our
purposes, it is useful to recall how crystallizations and Heegaard
diagrams are strongly correlated.

A cellular embedding $\iota$ of a 4-coloured graph $\Gamma$ into a
surface is said to be {\it regular} if there exists a cyclic
permutation $\varepsilon$ of $\Delta_3$ such that the regions of
$\iota$ are bounded by the images of
$\{\varepsilon_j,\varepsilon_{j+1}\}$-residues of $\Gamma$
($j\in\mathbb Z_4$). If $\Gamma$ is a bipartite (resp.
non-bipartite) crystallization of a 3-manifold $M,$ for each pair
$\alpha, \beta\in \Delta_3,$ let us set
$\{\alpha^{\prime},\beta^{\prime}\}=\Delta_3-\{\alpha ,\beta\}$
and let $F_{\alpha ,\beta}$ be the orientable (resp. non
orientable) surface of genus $g_{\alpha,
\beta}-1=g_{\alpha^{\prime},\beta^{\prime}}-1$, obtained from
$\Gamma$ by attaching a 2-cell to each $\{i,j\}$-residue such that
$\{i,j\}\neq\{\alpha ,\beta\}$ and
$\{i,j\}\neq\{\alpha^{\prime},\beta^{\prime}\}$. This construction
proves the existence of a regular embedding $\iota_{\alpha, \beta}
: \Gamma \to F_{\alpha, \beta}.$ Moreover, if $\mathcal D$ (resp.
$\mathcal D^{\prime}$) is an arbitrarily chosen $\{\alpha,
\beta\}$-residue (resp.
$\{\alpha^{\prime},\beta^{\prime}\}$-residue) of $\Gamma$, the
triple $\mathcal H_{\alpha,\beta,\mathcal D,\mathcal
D^{\prime}}=(F_{\alpha, \beta},{\bf x},{\bf y})$, where {\bf x}
(resp. {\bf y}) is the set of the images of all $\{\alpha,
\beta\}$-residues (resp.
$\{\alpha^{\prime},\beta^{\prime}\}$-residues)
of $\Gamma,$ except $\mathcal D$ (resp. $\mathcal D^{\prime}$), is a Heegaard diagram
of $M$. Conversely, given a Heegaard diagram $\mathcal H=(F,{\bf
x},{\bf y})$ of $M$ and $\alpha, \beta\in \Delta_3,$ there exists
a construction which, starting from $\mathcal H$ yields a
crystallization $\Gamma$ of $M$ such that $\mathcal H=\mathcal
H_{\alpha,\beta,\mathcal D,\mathcal D^{\prime}}$ for a suitable
choice of $\mathcal D$ and $\mathcal D^{\prime}$ in $\Gamma$ (see
\cite{[G$_2$]}).

Now, let us denote by $\mathcal R_{\mathcal D, \mathcal
D^{\prime}}$ the set of regions of $F_{\alpha, \beta}-({\bf x}\cup
{\bf y})=F_{\alpha, \beta}-\iota_{\alpha, \beta}((\Gamma_{\alpha,
\beta} - \mathcal D)\cup (\Gamma_{\alpha^{\prime}, \beta^{\prime}} -
\mathcal D^{\prime})).$

\smallskip
\medskip
\par \noindent
{\bf Definition 1.} Let $M$ be a closed 3-manifold, and let
$(\Gamma, \gamma)$ be a crystallization of $M.$  With the above
notations,  the {\it Gem-Matveev complexity} (or {\it GM-complexity},
for short) of $\Gamma$ is defined as the non-negative integer
$$ \hskip -0.3truecm c_{GM}(\Gamma) = \min \{\# V(\Gamma)-\# (V(\mathcal D)\cup V(\mathcal D^{\prime}) \cup V(\Xi)) \
\mid \ \alpha,\beta\in \Delta_3,  \mathcal D \subset
\Gamma_{\alpha,\beta}, \mathcal D^{\prime} \subset
\Gamma_{\alpha^{\prime}, \beta^{\prime}}, \Xi \in \mathcal
R_{\mathcal D, \mathcal D^{\prime}}\},$$ while the {\it
(non-minimal) GM-complexity} of $M$ is defined as
 the minimum value of {\it GM-complexity}, where the minimum is taken over all\footnote{Note that the original paper \cite{[C$_4$]}
 introduces also the notion of  {\it Gem-Matveev complexity} (or {\it GM-complexity}  for short) of $M$ - denoted by $c_{GM}(M)$ - as the minimum value of GM-complexity, where the minimum is taken only over crystallizations of $M$ which are \underline{minimal} with respect to the order of the graph. Obviously, $c^{\prime}_{GM}(M) \le c_{GM}(M)$
 for every $M$.}
crystallizations of $M:$
 $$ c^{\prime}_{GM}(M) = \min \{c_{GM}(\Gamma) \
\mid \ (\Gamma, \gamma)
 \ \text{crystallization \ of \ } M \}.$$

\bigskip
The following key result, due to \cite{[C$_4$]}, justifies the choice of
terminology:

\begin{prop} \label{relazione c/cGM}
For every closed 3-manifold $M$, Gem-Matveev complexity gives an
upper bound for Matveev complexity of $M$:
$$c(M) \le c^{\prime}_{GM}(M).$$
\end{prop}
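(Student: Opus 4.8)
The plan is to reduce the statement $c(M) \le c^{\prime}_{GM}(M)$ to the already-established inequality $c(M) \le c_{HM}(M)$ of Proposition 4, by showing that any crystallization $\Gamma$ of $M$ together with a choice of data $(\alpha,\beta,\mathcal D,\mathcal D',\Xi)$ realizing $c_{GM}(\Gamma)$ produces a generalized (in fact classical) Heegaard diagram $H$ of $M$ whose modified complexity $\widetilde{c}(H)$ is at most $\#V(\Gamma) - \#(V(\mathcal D)\cup V(\mathcal D')\cup V(\Xi))$. Combined with the definition $c_{HM}(M) = \min_H \widetilde{c}(H)$, this gives $c(M) \le c_{HM}(M) \le \widetilde{c}(H) \le c_{GM}(\Gamma)$, and then taking the minimum over all crystallizations $\Gamma$ of $M$ yields $c(M) \le c^{\prime}_{GM}(M)$.

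First I would invoke the construction recalled in Section 3: from $\Gamma$ and a choice $\alpha,\beta\in\Delta_3$ we obtain the regular embedding $\iota_{\alpha,\beta}\colon\Gamma\to F_{\alpha,\beta}$ into the surface of genus $g_{\alpha,\beta}-1$, and from a choice of residues $\mathcal D\subset\Gamma_{\alpha,\beta}$ and $\mathcal D'\subset\Gamma_{\alpha',\beta'}$ the classical Heegaard diagram $H = \mathcal H_{\alpha,\beta,\mathcal D,\mathcal D'} = (F_{\alpha,\beta},\mathbf{x},\mathbf{y})$ of $M$, where $\mathbf x$ (resp. $\mathbf y$) consists of the images of all $\{\alpha,\beta\}$-residues (resp. $\{\alpha',\beta'\}$-residues) except $\mathcal D$ (resp. $\mathcal D'$). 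Since $M$ is closed, this $H$ is a generalized Heegaard diagram with both systems proper; I would note that both systems are automatically reduced in the relevant sense (or, if not, pass to an element of $\mathrm{Rd}(H)$ without increasing the count), so that $\widetilde{c}(H) = c(H) - \max\{n(R)\mid R\in\mathcal R(H)\}$, where $c(H)$ is the number of singular (4-valent) vertices of $\Delta(H)$. The key combinatorial identification is then: the singular vertices of $\Delta(H)$ embedded in $F_{\alpha,\beta}$ correspond precisely to the vertices of $\Gamma$ that lie on the images of curves from both $\mathbf x$ and $\mathbf y$, i.e. to $V(\Gamma) \setminus (V(\mathcal D)\cup V(\mathcal D'))$; hence $c(H) = \#V(\Gamma) - \#(V(\mathcal D)\cup V(\mathcal D'))$. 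Likewise, the region $\Xi\in\mathcal R_{\mathcal D,\mathcal D'}$ of $F_{\alpha,\beta}-(\mathbf x\cup\mathbf y)$ is exactly a region $R\in\mathcal R(H)$, and the singular vertices it contains are those vertices of $\Gamma$ in $V(\Xi)\setminus(V(\mathcal D)\cup V(\mathcal D'))$; so $n(\Xi) = \#(V(\Xi)\setminus(V(\mathcal D)\cup V(\mathcal D'))) = \#(V(\Xi)\cup V(\mathcal D)\cup V(\mathcal D')) - \#(V(\mathcal D)\cup V(\mathcal D'))$. Subtracting gives $c(H) - n(\Xi) = \#V(\Gamma) - \#(V(\mathcal D)\cup V(\mathcal D')\cup V(\Xi))$, and since $\widetilde{c}(H)\le c(H) - n(\Xi)$ by definition of the max, we are done for this choice of data; minimizing over $(\alpha,\beta,\mathcal D,\mathcal D',\Xi)$ matches the definition of $c_{GM}(\Gamma)$.

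The main obstacle I anticipate is the bookkeeping of vertices of $\Gamma$ versus singular vertices of the diagram, i.e. making precise that a vertex of $\Gamma$ is a singular vertex of $\Delta(H)$ if and only if it lies simultaneously on an $\{\alpha,\beta\}$-residue other than $\mathcal D$ and on an $\{\alpha',\beta'\}$-residue other than $\mathcal D'$. Each vertex of $\Gamma$ has exactly one edge of each color, so it lies on exactly one $\{\alpha,\beta\}$-residue and exactly one $\{\alpha',\beta'\}$-residue; removing $\mathcal D$ and $\mathcal D'$ from the respective systems is precisely what deletes the vertices in $V(\mathcal D)\cup V(\mathcal D')$ from the 4-valent locus, and one must check that no further vertices become non-singular (no accidental tangency) and that circle components of $\Delta(H)$ contribute no singular vertices — both of which follow from the regular-embedding structure. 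A secondary point to address is whether the resulting diagram is genuinely reduced, or whether one must first reduce; in either case the inequality is preserved because reduction only removes curves and hence can only decrease the singular-vertex count, and the definition of $\widetilde{c}(H)$ for non-reduced $H$ is a minimum over $\mathrm{Rd}(H)$. With these two verifications in hand, the chain of inequalities closes and the proposition follows.
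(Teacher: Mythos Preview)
Your argument is correct, but it follows a different route from the paper's. The paper does not prove Proposition~\ref{relazione c/cGM} here at all: it simply attributes the result to \cite{[C$_4$]}, where the inequality is obtained directly by building an almost simple spine of $M$ from the crystallization data and counting its true vertices. Your approach instead factors through modified Heegaard complexity: you show that the Heegaard diagram $\mathcal H_{\alpha,\beta,\mathcal D,\mathcal D'}$ associated to a crystallization satisfies $\widetilde c(H)\le \#V(\Gamma)-\#(V(\mathcal D)\cup V(\mathcal D')\cup V(\Xi))$, whence $c_{HM}(M)\le c'_{GM}(M)$, and then invoke $c(M)\le c_{HM}(M)$. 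This is precisely the crystallization case of Lemma~\ref{disuguaglianza_II} combined with Proposition~1 (not ``Proposition~4'' as you wrote; the inequality $c(M)\le c_{HM}(M)$ is Proposition~1 in the paper's numbering). The direct spine construction of \cite{[C$_4$]} has the advantage of being self-contained and not relying on the Heegaard-complexity machinery, while your route is more economical within the logic of this paper, since it shows that Proposition~\ref{relazione c/cGM} is in fact a formal consequence of Proposition~1 together with (a special case of) Lemma~\ref{disuguaglianza_II}. Your worry about reducedness is also settled by the paper: Remark~2 confirms that for a crystallization the diagram obtained by deleting a single $\{\alpha,\beta\}$-residue and a single $\{\alpha',\beta'\}$-residue is already reduced, so no further passage to $\mathrm{Rd}(H)$ is needed.
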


\bigskip

Unfortunately, the edge-coloured graphs which are obtained, by
suitable constructions, from different representations of
manifolds are mostly non-contracted. Therefore, the above
definitions need slight modifications in order to be useful for
the general case of a non-contracted gem $(\Gamma, \gamma)$ of
$M.$

\noindent For each pair $\alpha, \beta\in \Delta_3,$ let
$K_{\alpha, \beta}$ be the 1-dimensional subcomplex of $K(\Gamma)$
generated by the  $\{\alpha, \beta\}$-coloured vertices. Moreover,
let $\mathbf D=\{\mathcal D_1, \dots, \mathcal D_{g_{\hat \alpha}
+g_{\hat \beta} -1}\}$ (resp. $\mathbf D^{\prime}=\{\mathcal
D_1^{\prime}, \dots, \mathcal D_{g_{\widehat \alpha^{\prime}}
+g_{\widehat \beta^{\prime}} -1}^{\prime}\}$) be a collection of
$\{\alpha, \beta\}$-coloured (resp. $\{\alpha^{\prime},
\beta^{\prime}\}$-coloured) cycles of $(\Gamma, \gamma)$
corresponding to a maximal tree of $K_{\alpha, \beta}$ (resp.
$K_{\alpha^{\prime}, \beta^{\prime}}$); we denote by $\mathcal
R_{\mathbf D, \mathbf D^{\prime}}$ the set of regions of
$F_{\alpha, \beta}-\iota_{\alpha, \beta}((\Gamma_{\alpha, \beta} -
\cup_{i=1, \dots, g_{\hat \alpha} +g_{\hat \beta} -1} \mathcal
D_i)\cup (\Gamma_{\alpha^{\prime}, \beta^{\prime}} - \cup_{j=1,
\dots, g_{\widehat \alpha^{\prime}} +g_{\widehat \beta^{\prime}}
-1} \mathcal D_j^{\prime})),$ $\iota_{\alpha, \beta} : \Gamma \to
F_{\alpha, \beta}$ being a regular embedding of $\Gamma$ into the
(orientable or non-orientable, according to the bipartition of
$\G$) closed surface of genus $g_{\alpha, \beta}- g_{\hat
\alpha}-g_{\hat \beta} +1.$

\bigskip

\par \noindent
{\bf Definition 2.} Let $M$ be a closed 3-manifold and let
$(\Gamma, \gamma)$ be an edge-coloured graph representing $M.$
With the above notations, the {\it GM-complexity} of $\Gamma$ is
defined as the non-negative integer
$$\begin{aligned}  c_{GM}(\Gamma) =
\min  \{\# V(\Gamma)-\# & \left[\left(\bigcup_{\mathcal D_i \in
\mathbf D} V(\mathcal D_i) \right) \cup \left(\bigcup_{\mathcal
D_j^{\prime} \in \mathbf D^{\prime}} V(\mathcal
D_j^{\prime})\right) \cup V(\Xi)\right] \ \mid \ \alpha,\beta\in
\Delta_3,  \\ \ & \hskip 4
 truecm \mathbf D \subset \Gamma_{\alpha,\beta},  \   \mathbf
D^{\prime} \subset \Gamma_{\alpha^{\prime},\beta^{\prime}}, \ \ \Xi \in
\mathcal R_{\mathbf D, \mathbf D^{\prime}} \}.
\end{aligned}$$

\bigskip

Note that if $\Gamma$ is contracted, the maximal tree of $K_{\alpha, \beta}$ (resp.
$K_{\alpha^{\prime}, \beta^{\prime}}$) consists of one edge, therefore  $\mathbf D$ (resp. $\mathbf D^{\prime}$) contains exactly one $\{\alpha, \beta\}$-coloured (resp. $\{\alpha^{\prime},
\beta^{\prime}\}$-coloured) cycle. Hence the above definition agrees with Definition 1 in the case of crystallizations.

\bigskip

\noindent {\bf Definition 3.} Let $M$ be a closed 3-manifold.
The {\it extended GM-complexity} of $M$ is defined as the minimum
value of {\it GM-complexity}, where the minimum is taken over
\underline{all} edge-coloured
graphs representing $M$ (without assumptions about
contractedness):
$$ \tilde c_{GM}(M) = \min \left\{c_{GM}(\Gamma) \
\mid \ |K(\Gamma)|=M \right\}.$$

\bigskip

The following result - due to \cite{[C$_5$]} - allows to consider
non-minimal $GM$-complexity and extended $GM$-complexity as
``improvements" of Gem-Matveev one, in order to estimate Matveev
complexity.

\begin{prop} \label{relazione c/ctildeGM}
For every closed 3-manifold $M$, the following chain of
inequalities holds:
$$c(M) \le \tilde c_{GM}(M) \le c^{\prime}_{GM}(M).$$
\end{prop}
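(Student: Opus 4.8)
The plan is to prove the two inequalities separately, the second one being essentially formal and the first one requiring the crystallization--to--spine construction already used for Proposition~\ref{relazione c/cGM}.

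For the inequality $\tilde c_{GM}(M)\le c'_{GM}(M)$ I would argue as follows. Every crystallization of $M$ is in particular an edge-coloured graph $\Gamma$ with $|K(\Gamma)|=M$, and by the remark following Definition~2 the integer $c_{GM}(\Gamma)$ given by Definition~2 coincides with the one given by Definition~1 whenever $\Gamma$ is contracted. Hence the minimum defining $\tilde c_{GM}(M)$ ranges over a set that contains the one defining $c'_{GM}(M)$, and the two functions being minimized agree on the smaller set; therefore $\tilde c_{GM}(M)\le c'_{GM}(M)$.

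For the inequality $c(M)\le\tilde c_{GM}(M)$ I would reproduce, in the non-contracted setting, the argument proving Proposition~\ref{relazione c/cGM}. Fix an edge-coloured graph $(\Gamma,\gamma)$ with $|K(\Gamma)|=M$, a pair $\alpha,\beta\in\Delta_3$, collections $\mathbf D\subset\Gamma_{\alpha,\beta}$ and $\mathbf D'\subset\Gamma_{\alpha',\beta'}$ corresponding to maximal trees of $K_{\alpha,\beta}$ and $K_{\alpha',\beta'}$, and a region $\Xi\in\mathcal R_{\mathbf D,\mathbf D'}$, chosen so that $\#V(\Gamma)-\#[(\bigcup V(\mathcal D_i))\cup(\bigcup V(\mathcal D_j'))\cup V(\Xi)]=c_{GM}(\Gamma)$. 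The regular embedding $\iota_{\alpha,\beta}\colon\Gamma\to F_{\alpha,\beta}$, together with the deletion of the residues in $\mathbf D$ and $\mathbf D'$, produces (by the construction recalled in Section~3, cf.\ \cite{[G$_2$]} and \cite{[BCG]}) a Heegaard diagram $\mathcal H=(F,\mathbf x,\mathbf y)$ of $M$ on the closed surface $F$ of genus $g_{\alpha,\beta}-g_{\hat\alpha}-g_{\hat\beta}+1$: since removing the residues of a maximal tree of $K_{\alpha,\beta}$ (resp.\ $K_{\alpha',\beta'}$) amounts to cancelling complementary $2$- and $3$-handles, the underlying manifold is still $M$. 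Applying Matveev's construction described in the Introduction to $\mathcal H$ and to the region $\Xi$ yields an almost simple spine $P\subset M$, namely $F$ with the meridian disks of $\mathbf x$ and $\mathbf y$ attached and the open disk of $\Xi$ removed. A routine inspection of the link of each point of $P$ (a circle at a surface point, a circle with a diameter along a single curve, $K_4$ at a crossing off $\partial\Xi$, and $K_4$ minus an edge at a crossing on $\partial\Xi$) shows that the true vertices of $P$ are exactly the points of $\mathbf x\cap\mathbf y$ not lying on $\partial\Xi$. Since each vertex of $\Gamma$ lies on a unique $\{\alpha,\beta\}$-residue and a unique $\{\alpha',\beta'\}$-residue, these crossings are in bijection with the vertices of $\Gamma$ belonging neither to a residue of $\mathbf D$ nor to a residue of $\mathbf D'$, so their number, after subtracting those on $\partial\Xi$, equals $c_{GM}(\Gamma)$. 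Thus $c(M)\le c_{GM}(\Gamma)$, and taking the minimum over all gems of $M$ gives $c(M)\le\tilde c_{GM}(M)$.

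The only genuinely delicate point, and the one I expect to be the main obstacle, is the verification that the construction of Section~3 applied after deleting the residues of maximal trees of $K_{\alpha,\beta}$ and $K_{\alpha',\beta'}$ still outputs a Heegaard diagram of $M$ itself (equivalently, that the reduced systems $\mathbf x$ and $\mathbf y$ determine the same handlebody and compression body as the full residue systems, and that $\Xi$ is a region on whose complement $M$ minus a ball collapses). This is precisely the step carried out in \cite{[C$_5$]}; once it is granted, the remainder of the argument is the verbatim transcription of the proof of Proposition~\ref{relazione c/cGM}, which does not use orientability.
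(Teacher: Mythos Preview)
The paper does not supply a proof of this proposition: it simply attributes the result to \cite{[C$_5$]}. Your argument is correct and follows precisely the route of that reference---the second inequality is formal, and the first is obtained by turning the regular embedding of a gem (after removing the residues corresponding to maximal trees of $K_{\alpha,\beta}$ and $K_{\alpha',\beta'}$) into a Heegaard diagram of $M$ and then into an almost simple spine via Matveev's construction. You have also correctly isolated the only nontrivial point, namely that the reduced curve systems still describe $M$; this is exactly what \cite{[C$_5$]} checks.

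It is worth observing that the present paper implicitly contains an alternative proof of the first inequality: Lemma~\ref{disuguaglianza_II} shows $c_{HM}(M)\le\tilde c_{GM}(M)$ by the same gem-to-diagram construction you outline, and combining this with Proposition~1 ($c(M)\le c_{HM}(M)$) yields $c(M)\le\tilde c_{GM}(M)$ without ever building the spine directly. The two arguments are really the same construction viewed at different stages---yours stops at the spine, the paper's stops at the Heegaard diagram and then invokes the general bound---so there is no substantive difference.
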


\bigskip
\bigskip

\section{\hskip -0.7cm . Proof of the main result}\label{paragrafo_uguaglianza}

In this section we prove the announced equality between the two
(apparently) different approaches to Matveev complexity described
in the previous sections.

\begin{lemma} \label{disuguaglianza_I}
For every closed 3-manifold $M$, the following inequality holds:
$$ c^{\prime}_{GM}(M) \le c_{HM}(M).$$
\end{lemma}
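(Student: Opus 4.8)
The plan is to show that any generalized Heegaard diagram $H$ of $M$ can be converted into a crystallization $\Gamma$ of $M$ together with a choice of colours $\alpha,\beta$, of residues (or cycle systems) $\mathcal D,\mathcal D'$, and of a region $\Xi$, such that $c_{GM}(\Gamma)\le\widetilde c(H)$. Taking the minimum over all $H\in\mathcal H(M)$ on the right and noting that $c'_{GM}(M)$ is itself a minimum over crystallizations, this will give the desired inequality. The key tool is the classical correspondence recalled at the end of Section~3: given any (reduced) Heegaard diagram $\mathcal H=(F,\mathbf x,\mathbf y)$ of $M$ and any pair $\alpha,\beta\in\Delta_3$, the construction of \cite{[G$_2$]} produces a crystallization $\Gamma$ of $M$ with $\mathcal H=\mathcal H_{\alpha,\beta,\mathcal D,\mathcal D'}$ for suitable $\mathcal D,\mathcal D'$. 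Since for closed $M$ every generalized Heegaard diagram reduces (via $\textup{Rd}(H)$) to an honest Heegaard diagram, and since $\widetilde c(H)=\min\{\widetilde c(H')\mid H'\in\textup{Rd}(H)\}$, it suffices to treat a reduced diagram $H'$ and to produce a crystallization realizing it.

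First I would take $H'\in\textup{Rd}(H)$ attaining $\widetilde c(H)$, so that $\widetilde c(H)=c(H')-\max\{n(R)\mid R\in\mathcal R(H')\}=|\mathbf x\cap\mathbf y|-\max_R n(R)$, where $n(R)$ counts the singular (i.e.\ 4-valent) vertices in the region $R$. Next I would invoke \cite{[G$_2$]} to obtain a crystallization $\Gamma$ of $M$ and colours $\alpha,\beta$ with $H'=\mathcal H_{\alpha,\beta,\mathcal D,\mathcal D'}$; under the regular embedding $\iota_{\alpha,\beta}\colon\Gamma\to F_{\alpha,\beta}$ the curves $\mathbf x$ are exactly the images of the $\{\alpha,\beta\}$-residues other than $\mathcal D$ and the curves $\mathbf y$ the images of the $\{\alpha',\beta'\}$-residues other than $\mathcal D'$. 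The crucial bookkeeping step is to match the two complexity counts: the vertices of $\Gamma$ correspond to the intersection points of the embedded residue-curves; the vertices lying on $\mathcal D$ (resp.\ $\mathcal D'$) are precisely the intersection points of that one discarded residue-curve with the curves of $\mathbf y$ (resp.\ $\mathbf x$); and the vertices lying on a region $\Xi\in\mathcal R_{\mathcal D,\mathcal D'}$ correspond, after discarding $\mathcal D$ and $\mathcal D'$, to intersection points that become ``non-singular'' in Matveev's spine construction — exactly the count $n(R)$ for the corresponding region $R$ of $\mathcal R(H')$. Hence $\#V(\Gamma)-\#(V(\mathcal D)\cup V(\mathcal D')\cup V(\Xi))$ equals $|\mathbf x\cap\mathbf y|-(\text{contribution of }\mathcal D,\mathcal D')-n(R)$, and choosing $\Xi$ to maximise $n(R)$ this is at most $c(H')-\max_R n(R)=\widetilde c(H')=\widetilde c(H)$. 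Taking the minimum defining $c_{GM}(\Gamma)$ only decreases the left side, and then minimising over $H$ yields $c'_{GM}(M)\le c_{HM}(M)$.

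The main obstacle, which I would treat carefully rather than wave through, is the precise identification of the three vertex sets $V(\mathcal D)$, $V(\mathcal D')$, $V(\Xi)$ on the graph side with the corresponding data on the diagram side — in particular checking that the vertices on $\mathcal D$ and on $\mathcal D'$ do not overlap each other (they meet only in the single vertex shared, if any, which must be accounted for once) and that a region of $\Sigma_g-\Delta(H')$ corresponds bijectively to a region in $\mathcal R_{\mathcal D,\mathcal D'}$ once the two extra residue-curves are removed. One must also verify that the non-orientable case (where $\Gamma$ is non-bipartite and $F_{\alpha,\beta}$ is the non-orientable surface of the right genus) is covered by the same construction of \cite{[G$_2$]} and by Proposition~2 as extended above; this is where the genus formula $g_{\alpha,\beta}-1=g_{\alpha',\beta'}-1$ and the correction of the edge-count formula for $E(T_i)$ play their roles. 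Once these identifications are in place the inequality follows by a direct comparison of the two minima.
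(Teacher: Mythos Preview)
Your overall strategy is the paper's strategy: pass to a reduced diagram $H'$ realising $\widetilde c(H)$, invoke Gagliardi's construction \cite{[G$_2$]} to get a crystallization $\Gamma$ with $H'=\mathcal H_{\alpha,\beta,\mathcal D,\mathcal D'}$, and then read off $c_{GM}(\Gamma)\le c(H')-n(\bar R)$ from the identification $\#V(\Gamma)-\#(V(\mathcal D)\cup V(\mathcal D'))=c(H')$ together with the correspondence between regions of $\mathcal R(H')$ and of $\mathcal R_{\mathcal D,\mathcal D'}$. That part of the bookkeeping is fine (your aside about $\mathcal D$ and $\mathcal D'$ meeting in ``the single vertex shared, if any'' is not right --- they are an $\{\alpha,\beta\}$- and an $\{\alpha',\beta'\}$-cycle and may share several vertices --- but this is harmless for the union count).

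The genuine gap is the sentence ``invoke \cite{[G$_2$]} to obtain a crystallization $\Gamma$ of $M$''. Lemma~4 of \cite{[G$_2$]} does not apply to an arbitrary reduced Heegaard diagram: it requires hypotheses (a), (b), (c), and the paper spends most of its proof checking them. Condition~(a) follows from the minimality $\widetilde c(H')=c_{HM}(M)$; condition~(b) can be arranged without changing $\widetilde c(H')$; but condition~(c) asks that the auxiliary graph $\Gamma'$ in the planar model $\Lambda_{2g}$ (built from the curves of $\mathcal C''$ together with two copies of each curve of $\mathcal C'$) be connected, and this can fail. When it fails the diagram splits as $\bar H'_1,\dots,\bar H'_h$ representing summands $M_1,\dots,M_h$ with $M=\#_i M_i$; one must then apply \cite{[G$_2$]} to each piece, assemble the resulting crystallizations by graph connected sum, and argue that both $c_{HM}$ and $c_{GM}$ behave additively along this decomposition (in particular that the maximising region $\bar R$ is necessarily the ``external'' region fused from the $\bar R_i$). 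Your proposal does not contain this case split, and without it the appeal to \cite{[G$_2$]} is not justified. The paper also disposes separately of the case $c_{HM}(M)=0$ by citing the known equality $c'_{GM}(M)=0$ for the finitely many complexity-zero manifolds; you should include that as well, since in that case $H'$ may have no singular vertices at all and condition~(c) again fails.
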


\begin{proof}
First of all, we can suppose $c_{HM}(M)\ne 0$, since for each
closed 3-manifold with Matveev complexity zero
$c^{\prime}_{GM}(M)=0$ holds (see \cite{[C$_4$]} and
\cite{[CC$_1$]}).

Let now $\bar H= (\Sigma_g, \mathcal{C'},\mathcal{C''})$ be a
generalized Heegaard diagram of $M$, such that $c_{HM}(\bar H)=
c_{HM}(M) = \bar c$. By definition, there exists a reduced
Heegaard diagram $\bar H^{\prime}$ of $M$ ($\bar
H^{\prime}\in\textup{Rd}(\bar H)$), with $c_{HM}(\bar H^{\prime})=
\bar c$; let $\bar R\in \mathcal{R}(\bar H^{\prime})$ be the
region of $\Delta (\bar H^{\prime})$ such that $c_{HM}(\bar
H^{\prime})=c(\bar H^{\prime})- n(\bar R)$ ($n(\bar R)$ being the
number of singular vertices contained in $\bar R$).

We are going to apply to $\bar H^{\prime}$ the construction
described in \cite[Lemma 4]{[G$_2$]}.
Note that the hypothesis $c_{HM}(\bar H^{\prime})= c_{HM}(M)$ directly
implies that $\bar H^{\prime}$ satisfies condition (a) of \cite[Lemma
4]{[G$_2$]}. Moreover, condition (b) of
the cited Lemma may also be assumed without affecting $c_{HM}(\bar H^{\prime}).$

\smallskip
Let us first suppose that the reduced  Heegaard diagram $\bar
H^{\prime}$  is such that  the graph $\Gamma^{\prime}$ imbedded in
$\Lambda_{2g}$ consisting of all the curves of $\mathcal{C''}$ and
two copies of each curve of $\mathcal{C'}$ (see \cite[p.
476]{[G$_2$]} for details) is connected. In this case, $\bar
H^{\prime}$ trivially satisfies also condition (c) of \cite[Lemma
4]{[G$_2$]}, unless  there is no intersection between the curves
of $\mathcal{C'}$ and $\mathcal{C''}$  (i.e. unless $\bar
H^{\prime}$ contradicts the hypothesis $c_{HM}(M) \ne 0$). As a
consequence, it is possible to construct a crystallization $\bar
\Gamma$  of $M$, such that one of its associated Heegaard diagrams
is exactly $\bar H^{\prime}:$ this means that a  $\{0,
2\}$-residue $\mathcal D$ (resp. a $\{1,3\}$-residue $\mathcal
D^{\prime}$) of $\bar \Gamma$ exists so that $\bar H^{\prime} =
\mathcal H_{0,2,\mathcal D,\mathcal D^{\prime}}=(F_{0, 2},{\bf
x},{\bf y})$, where $F_{0,2}$ is the surface of genus $g_{0,2}-1$
into which $\bar \Gamma$ regularly embeds via $\iota_{0,2}$ and
{\bf x} (resp. {\bf y}) is the set of the images in $\iota_{0,2}$
of all $\{0, 2\}$-residues (resp. $\{1, 3\}$-residues) of $\bar
\Gamma$ but $\mathcal D$ (resp. $\mathcal D^{\prime}$) (see
Section 2). It is now easy to check that $\bar R\in
\mathcal{R}(\bar H^{\prime})$ corresponds to a region $\Xi \in
\mathcal R_{\mathcal D, \mathcal D^{\prime}}$, where $\mathcal
R_{\mathcal D, \mathcal D^{\prime}}$ denotes the set of regions of
$F_{0,2}-({\bf x}\cup {\bf y}).$ Hence, by definition, $
c_{GM}(\bar \Gamma) \le  \# V(\bar \Gamma)-\# (V(\mathcal D)\cup
V(\mathcal D^{\prime}) \cup V(\Xi))= c(\bar H^{\prime})- n(\bar
R)= \bar c.$ In this case, the thesis $c^{\prime}_{GM}(M) \le
c_{HM}(M)$ directly follows.

\smallskip
Let us now assume that the reduced  Heegaard diagram $\bar H^{\prime}$ is
such that the graph $\Gamma^{\prime}$ imbedded in $\Lambda_{2g}$ is not
connected.
If  $\Gamma^{\prime}_1,$ $\Gamma^{\prime}_2,$ $\dots,$ $\Gamma^{\prime}_h$
($h \ge 2$) denote its connected components, then the reduced  Heegaard
diagram $\bar H^{\prime}$ splits into $h$ Heegaard diagrams $\bar
H^{\prime}_1,$  $\bar H^{\prime}_2,$ $\dots,$  $\bar H^{\prime}_h,$ where
$\bar H^{\prime}_i= (\Sigma_{g_i},
\mathcal{C}_i',\mathcal{C}_i'')$ is such that $\sum_{i=1}^h g_i =g,$
$\Sigma_{g}= \#_{i=1}^h  \Sigma_{g_i}$ and $\cup_{i=1}^h \mathcal{C}_i' =
\mathcal{C'}$ (resp. $\cup_{i=1}^h \mathcal{C}_i'' = \mathcal{C''}$).
Note that $\bar R$ is the only region of $\Delta (\bar H^{\prime})$
obtained by ``fusing" the regions $\bar R_1,$ $\bar R_2,$ $\dots,$ $\bar
R_h$   ($\bar R_i$ being a suitable region of $\Delta (\bar H^{\prime}_i)$
with $n(\bar R_i) \ne 0$ singular vertices, and $ \sum_{i=1}^h n(\bar
R_i)= n(\bar R)$).\footnote{Roughly speaking, we can say that $\bar R$ is the ``external"
region of the embedding of the Heegaard diagram $\bar H^{\prime}$  in
$\Lambda_{2g}$, and that $\bar R_i$ is the ``external" region of the
embedding of the Heegaard diagram $\bar H^{\prime}_i$ in $\Lambda_{2g_i}$,
for each $i=1, \dots, h$.}
In fact, if this is not the case, it is easy to check that a new Heegaard
diagram $\bar H^{\prime\prime}$
of $M$ with this property exists, with  $c_{HM}(\bar H^{\prime \prime}) <
c_{HM}(\bar H^{\prime}.)$
Moreover, $c_{HM}(\bar H^{\prime}_i)=c(\bar H^{\prime}_i)- n(\bar R_i)$
trivially holds, together with $c_{HM}(\bar H^{\prime}_i)= c_{HM}(M_i)$,
$M_i$ ($i=1, \dots, h$) being the 3-manifold represented by the Heegaard
diagram $\bar H^{\prime}_i,$ so that $M = \#_{i=1}^h M_i.$  Hence,
$c_{HM}(M)= \sum_{i=1}^h c_{HM}(M_i).$

On the other hand, if $\bar \Gamma$ (resp. $\bar \Gamma^{(i)},$ $\forall
i=1, \dots h$) is the crystallization of $M$ (resp. of $M_i$) obtained
from $\bar H^{\prime}$ (resp. $\bar H^{\prime}_i$) by  the procedure  of
\cite[Lemma 4]{[G$_2$]}, then $\bar \Gamma$ may be trivially obtained by
graph connected sum (see \cite{[FGG]}) from $\bar \Gamma^{(1)},$ $\bar
\Gamma^{(2)},$ $\dots,$ $\bar \Gamma^{(h)}.$

Now, since $\bar H^{\prime}_i$ is such that the graph
$\Gamma^{\prime}_i$ is connected, the above discussion ensures $
c_{GM}(\bar \Gamma^{(i)}) \le c_{HM}(\bar H^{\prime}_i),$ for each
$i=1, \dots, h$.

Finally, $c_{GM}(\bar \Gamma) \le \sum_{i=1}^h c_{GM}(\bar
\Gamma^{(i)})$ trivially holds by construction. The thesis now
directly follows: $c^{\prime}_{GM}(M) \le c_{GM}(\bar \Gamma) \le
\sum_{i=1}^h c_{GM}(\bar \Gamma^{(i)}) = \sum_{i=1}^h
c^{\prime}_{GM}(M_i) = c_{HM}(M).$
\end{proof}

\medskip

\begin{lemma} \label{disuguaglianza_II}
For every closed 3-manifold $M$, the following inequality holds:
$$ c_{HM}(M) \le \tilde c_{GM}(M).$$
\end{lemma}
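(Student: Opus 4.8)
The plan is to reverse the direction of the argument used in the previous lemma: there we started from a reduced Heegaard diagram and produced a crystallization; here we start from an arbitrary edge-coloured graph (gem) $(\Gamma,\gamma)$ representing $M$ and produce a generalized Heegaard diagram of $M$ whose modified complexity is bounded above by $c_{GM}(\Gamma)$. First I would fix a gem $\Gamma$ with $c_{GM}(\Gamma) = \tilde c_{GM}(M)$, and fix the data realizing the minimum in Definition~2: a pair $\alpha,\beta\in\Delta_3$, collections $\mathbf D\subset\Gamma_{\alpha,\beta}$ and $\mathbf D^\prime\subset\Gamma_{\alpha^\prime,\beta^\prime}$ corresponding to maximal trees of $K_{\alpha,\beta}$ and $K_{\alpha^\prime,\beta^\prime}$, the regular embedding $\iota_{\alpha,\beta}\colon\Gamma\to F_{\alpha,\beta}$ (into the closed, possibly non-orientable, surface of genus $g_{\alpha,\beta}-g_{\hat\alpha}-g_{\hat\beta}+1$), and a region $\Xi\in\mathcal R_{\mathbf D,\mathbf D^\prime}$ achieving $c_{GM}(\Gamma)=\#V(\Gamma)-\#[(\bigcup V(\mathcal D_i))\cup(\bigcup V(\mathcal D_j^\prime))\cup V(\Xi)]$.

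Next I would read off from this embedding a generalized Heegaard diagram $H=(F_{\alpha,\beta},\mathbf x,\mathbf y)$, where $\mathbf x$ is the set of images under $\iota_{\alpha,\beta}$ of all $\{\alpha,\beta\}$-residues of $\Gamma$ except those in $\mathbf D$, and $\mathbf y$ the analogous set for $\{\alpha^\prime,\beta^\prime\}$-residues except those in $\mathbf D^\prime$ — this is exactly the construction recalled in Section~3 (in the contracted case) extended to the non-contracted case via the $K_{\alpha,\beta}$ maximal-tree machinery. One must check that $\mathbf x$ is a proper system of curves (so that $H$ is a legitimate generalized Heegaard diagram in the sense of Section~2) and that $H_M = M$; both follow from the fact that removing residues corresponding to a maximal tree of $K_{\alpha,\beta}$ amounts to removing complementary handles, so the associated compression bodies are unchanged and the diagram still represents $M$. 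I would then reduce $H$ to a reduced Heegaard diagram $H^\prime\in\textup{Rd}(H)$: the curve systems $\mathbf x,\mathbf y$ may fail to be reduced, but by the discussion in Section~2 we may delete curves corresponding to edges of the graphs $T_i\in\mathcal A(\mathcal C)$ to obtain a reduced system determining the same (compression) body, without changing $M$. The singular vertices of $\Delta(H^\prime)$ are precisely the 4-valent vertices of $\mathbf x\cap\mathbf y$, i.e. the images under $\iota_{\alpha,\beta}$ of those vertices of $\Gamma$ lying on both a surviving $\{\alpha,\beta\}$-residue and a surviving $\{\alpha^\prime,\beta^\prime\}$-residue — so $c(H^\prime)$ equals $\#V(\Gamma)$ minus the number of vertices lying on some removed residue, and the region $\Xi$ maps to a region $R^\prime\in\mathcal R(H^\prime)$ with $n(R^\prime)$ equal to the number of singular vertices in $V(\Xi)$. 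Putting this together gives $\widetilde c(H^\prime)=c(H^\prime)-\max\{n(R)\}\le c(H^\prime)-n(R^\prime)\le \#V(\Gamma)-\#[(\bigcup V(\mathcal D_i))\cup(\bigcup V(\mathcal D_j^\prime))\cup V(\Xi)] = c_{GM}(\Gamma)$, and hence $c_{HM}(M)\le\widetilde c(H)\le\widetilde c(H^\prime)\le\tilde c_{GM}(M)$.

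The main obstacle I expect is the careful bookkeeping in the non-contracted, non-reduced setting: one must be sure that the vertices removed when passing from $\Gamma$ to the diagram (i.e. those on residues in $\mathbf D\cup\mathbf D^\prime$) together with the vertices absorbed into $\Xi$ are counted with exactly the right multiplicities, and that reducing $H$ to $H^\prime$ via the trees $T_i$ does not destroy the correspondence between $\Xi$ and a genuine region of $H^\prime$ — in particular that the reduction can be chosen compatibly with the chosen $\mathbf D,\mathbf D^\prime$ so that no further singular vertices are created. This is where the edge-count formula $\vert E(T_i)\vert = \vert\mathcal C\vert - g - \max\{0,h-1\} + \sum_{j=1}^h g_j$ and the structure of $\mathcal A(\mathcal C)$ from Section~2 enter: they guarantee that a reduced subsystem determining the same body exists and that the discarded curves meet $\mathbf y$ only in vertices that are already being discarded, so $c(H^\prime)\le\#V(\Gamma)-\#(\bigcup V(\mathcal D_i)\cup\bigcup V(\mathcal D_j^\prime))$ as needed. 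A secondary point to verify is the non-orientable case: since $\Gamma$ may be non-bipartite, $F_{\alpha,\beta}$ is then a non-orientable surface and the curves of $\mathbf x,\mathbf y$ must still be orientation-preserving (two-sided) on $F_{\alpha,\beta}$ — this holds because each residue of an edge-coloured graph always has an annular neighbourhood in the regular embedding, so the extension of the definition in Section~2 applies verbatim.
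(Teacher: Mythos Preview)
Your approach is essentially the same as the paper's: start from a gem $\Gamma$ realizing $\tilde c_{GM}(M)$, fix the optimal data $(\alpha,\beta,\mathbf D,\mathbf D^\prime,\Xi)$, and read off from the regular embedding $\iota_{\alpha,\beta}$ a Heegaard diagram $H=(F_{\alpha,\beta},\mathbf x,\mathbf y)$ whose modified complexity is bounded by $c_{GM}(\Gamma)$.

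The one place where you diverge from the paper is your ``main obstacle'': you worry that $\mathbf x,\mathbf y$ may fail to be reduced, and you plan an auxiliary reduction step $H\to H^\prime$ with delicate bookkeeping via the trees $T_i\in\mathcal A(\mathcal C)$ and the edge-count formula. This step is unnecessary, and your justification for it (``the discarded curves meet $\mathbf y$ only in vertices that are already being discarded'') is not actually established by the edge-count formula. The paper avoids the issue entirely by observing that the two pieces $\mathcal A_{\alpha,\beta}$ and $\mathcal A_{\alpha^\prime,\beta^\prime}$ into which $F_{\alpha,\beta}$ cuts $|K(\Gamma)|$ collapse onto the $1$-complexes $K_{\alpha,\beta}$ and $K_{\alpha^\prime,\beta^\prime}$, hence are genuine \emph{handlebodies} of genus $g(F_{\alpha,\beta})=g_{\alpha,\beta}-\#\mathbf D=g_{\alpha^\prime,\beta^\prime}-\#\mathbf D^\prime$. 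Since each system $\mathbf x,\mathbf y$ has exactly $g(F_{\alpha,\beta})$ curves and bounds a handlebody, both are automatically proper \emph{and} reduced. So $H$ itself is already a reduced Heegaard diagram, the singular vertices of $\Delta(H)$ are exactly the vertices of $\Gamma$ not lying on any cycle of $\mathbf D\cup\mathbf D^\prime$, and $\Xi$ is literally a region of $\mathcal R(H)$ with maximal $n(\Xi)$; this gives $c_{HM}(H)=c_{GM}(\Gamma)$ directly, with no reduction step and no inequality to chase.

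In short: your plan is correct and on the right track, but you should replace the reduction-and-bookkeeping paragraph by the handlebody/collapse argument above; it makes the obstacle you anticipate vanish.
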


\begin{proof}
Let $\G$ be a gem of $M$ such that $c_{GM}(\G)=\tilde c_{GM}(M)$
and let $\alpha, \beta\in\Delta_3$ be such that the minimal
$GM$-complexity of $\G$ is obtained by means of the regular
embedding associated to $\alpha$ and $\beta$.
Moreover, if $K_{\alpha,\beta}$ (resp. $K_{\alpha^{\prime},\beta^{\prime}}$) is the 1-dimensional subcomplex of $K(\Gamma)$
generated by the $\{\alpha,\beta\}$-coloured
(resp. $\{\alpha^{\prime},\beta^{\prime}\}$-coloured) vertices, there
exist a maximal tree of $K_{\alpha,\beta}$ (resp.
$K_{\alpha^{\prime},\beta^{\prime}}$) and an element $\Xi\in
\mathcal R_{\mathbf D, \mathbf D^{\prime}}$ ($\mathbf D$ and
$\mathbf D^\prime$ being the collections of $\{\alpha,\beta\}$-
and $\{\alpha^{\prime},\beta^{\prime}\}$-coloured cycles
corresponding to the maximal trees of
$K_{\alpha^{\prime},\beta^{\prime}}$ and $K_{\alpha,\beta}$
respectively) such that
$$c_{GM}(\G)=\# V(\Gamma)-\# (V(\mathbf D)\cup V(\mathbf D^{\prime}) \cup V(\Xi)),$$

\noindent where $V(\mathbf D)$ (resp. $V(\mathbf D^{\prime})$)
denotes the set of the vertices of all the cycles in $\mathbf D$
(resp. $\mathbf D^{\prime}$).

Let $\bar K$ be the largest 2-dimensional subcomplex of the first
barycentric subdivision of $K(\Gamma)$ disjoint from the first
barycentric subdivisions of $K_{\alpha,\beta}$ and
$K_{\alpha^{\prime},\beta^{\prime}}.$ The surface $F$,
triangulated by $\bar K$, splits $K(\Gamma)$ into two polyhedra
$\mathcal A_{\alpha,\beta}$ and $\mathcal
A_{\alpha^{\prime},\beta^{\prime}}$ whose intersection is exactly
$F$. Moreover
$F=F_{\alpha,\beta}=F_{\alpha^{\prime},\beta^{\prime}}$ (where -
according to the previous section - $F_{\alpha,\beta}$ and
$F_{\alpha^{\prime},\beta^{\prime}}$ are the surfaces into which
$\Gamma$ regularly embeds via $\iota_{\alpha,\beta}$ and
$\iota_{\alpha^{\prime},\beta^{\prime}}$ respectively).

Both $\mathcal A_{\alpha,\beta}$ and $\mathcal
A_{\alpha^{\prime},\beta^{\prime}}$ are compression bodies. In
fact, we can think $\mathcal A_{\alpha,\beta}$ (resp. $\mathcal
A_{\alpha^{\prime},\beta^{\prime}}$) as constructed by considering
a collar of $F$ and by adding on $F\times \{1\}$ the 2-handles
whose attaching spheres are all the
$\{\alpha^{\prime},\beta^{\prime}\}$-coloured (resp.
$\{\alpha,\beta\}$-coloured) cycles of $\Gamma$, except those of
$\mathbf D^\prime$ (resp. $\mathbf D$).

Therefore, we consider the generalized Heegaard diagram of $M$ given by
$H=(F,\mathcal C^\prime,\mathcal C^{\prime\prime})$, where $\mathcal C^\prime$ and $\mathcal C^{\prime\prime}$
are the systems of curves on $F$ defined by the attaching cycles described above.

Actually, $\mathcal A_{\alpha,\beta}$ (resp. $\mathcal
A_{\alpha^{\prime},\beta^{\prime}}$) is a handlebody of genus
$g_{\alpha^{\prime},\beta^{\prime}}-\#\mathbf D^\prime$ (resp.
$g_{\alpha,\beta}-\#\mathbf D$), since it collapses to the graph
$K_{\alpha,\beta}$ (resp. $K_{\alpha^{\prime},\beta^{\prime}}$).

As a consequence
$g(F)=g_{\alpha^{\prime},\beta^{\prime}}-\#\mathbf
D^\prime=g_{\alpha,\beta}-\#\mathbf D$ and both  $\mathcal
C^\prime$ and $\mathcal C^{\prime\prime}$ are proper and reduced.
Since the number of singular vertices of $H$ is exactly $\#
V(\Gamma)-\# (V(\mathbf D)\cup V(\mathbf D^{\prime}))$ and $\Xi$
obviously corresponds to a region of $H$ having the maximal vertex
number, we have $c_{HM}(H)=c_{GM}(\G)$, hence $c_{HM}(M)\leq
\tilde c_{GM}(M).$
\end{proof}

\medskip

\begin{rem}
\textup{The proof of Lemma \ref{disuguaglianza_II} shows that {\it
any gem $\Gamma$ of a closed 3-manifold $M$ induces three
generalized Heegaard diagrams for $M$}, one for each choice of a
pair of different colours $\alpha, \beta\in \Delta_3$. Moreover,
the sets of all $\{\alpha,\beta\}$- and all
$\{\alpha^{\prime},\beta^{\prime}\}$-cycles of $\Gamma$ are two
proper systems of curves on the surface $F_{\alpha ,\beta}$, which
are always non-reduced. In the case of a crystallization, a
reduced diagram may be simply obtained by removing an arbitrary
curve from both systems of curves (i.e. the sets $\mathbf D$ and
$\mathbf D^\prime$ are the smallest possible, each consisting of
only one element).}
\end{rem}

As a direct consequence of Lemma \ref{disuguaglianza_I} and Lemma \ref{disuguaglianza_II}, together with Proposition \ref{relazione c/ctildeGM},
the equality among the three notions follows:

\begin{prop} \label{uguaglianza}
For every closed 3-manifold $M$,
$$  c_{HM}(M) = c^{\prime}_{GM}(M) = \tilde c_{GM}(M).$$
\end{prop}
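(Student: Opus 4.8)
The plan is to deduce Proposition~\ref{uguaglianza} purely formally from the two lemmas already proved in this section together with Proposition~\ref{relazione c/ctildeGM}, so no new geometric construction is needed; the task is simply to chain the inequalities into a cycle and conclude that all three quantities agree.

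\textbf{Step 1: collect the available inequalities.} Proposition~\ref{relazione c/ctildeGM} gives, for every closed $3$-manifold $M$,
$$ c(M) \le \tilde c_{GM}(M) \le c^{\prime}_{GM}(M). $$
Lemma~\ref{disuguaglianza_I} gives $c^{\prime}_{GM}(M) \le c_{HM}(M)$, and Lemma~\ref{disuguaglianza_II} gives $c_{HM}(M) \le \tilde c_{GM}(M)$.

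\textbf{Step 2: close the loop.} Concatenating the last two inequalities of Step~1 with the two lemmas yields
$$ \tilde c_{GM}(M) \le c^{\prime}_{GM}(M) \le c_{HM}(M) \le \tilde c_{GM}(M), $$
where the first inequality comes from Proposition~\ref{relazione c/ctildeGM}, the second from Lemma~\ref{disuguaglianza_I}, and the third from Lemma~\ref{disuguaglianza_II}. A finite chain of $\le$'s that returns to its starting point forces every inequality in the chain to be an equality, hence
$$ c_{HM}(M) = c^{\prime}_{GM}(M) = \tilde c_{GM}(M), $$
which is exactly the claimed statement. (The additional inequality $c(M)\le \tilde c_{GM}(M)$ from Proposition~\ref{relazione c/ctildeGM} is not needed for the equality, but confirms that this common value is an upper bound for Matveev complexity.)

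There is essentially no obstacle here: the entire content lies in Lemmas~\ref{disuguaglianza_I} and~\ref{disuguaglianza_II}, whose proofs (via the crystallization-to-Heegaard-diagram correspondence of \cite{[G$_2$]} and the converse passage from a regular embedding of a gem to a generalized Heegaard diagram) are the substantive part of the section. The only point worth a moment's care is making sure the chain is genuinely circular — i.e.\ that the three lemmas/propositions together bound each of the three quantities both from above and from below — which is immediate from the display in Step~2. Thus the proposition follows at once.
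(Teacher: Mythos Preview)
Your proof is correct and matches the paper's own argument exactly: the paper states that the proposition is ``a direct consequence of Lemma~\ref{disuguaglianza_I} and Lemma~\ref{disuguaglianza_II}, together with Proposition~\ref{relazione c/ctildeGM},'' which is precisely the cyclic chain of inequalities you wrote out. There is nothing to add.
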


Hence, both  modified Heegaard complexity and non-minimal
$GM$-complexity and  extended $GM$-complexity turn out to be different
tools to compute the same upper bound for Matveev complexity.

Actually, by experimental results of \cite{[C$_2$]} and \cite{[BCrG$_1$]}, this upper bound is proved to be sharp (i.e.: $c(M) = c_{HM}(M) = c^{\prime}_{GM}(M) = \tilde c_{GM}(M)$)   for the thirty-eight  (resp. sixteen)
closed connected prime orientable (resp. non-orientable)
3-manifolds admitting a coloured triangulation with at most $26$ (resp. $30$)
tetrahedra. As far as we know, there is no example where the
strict inequality holds.

Hence, we formulate the following:

\begin{conj}
For every closed connected  3-manifold $M$,
$$c(M) = c_{HM}(M) = c^{\prime}_{GM}(M) = \tilde c_{GM}(M).$$
\end{conj}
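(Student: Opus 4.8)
Since by Proposition \ref{uguaglianza} the three quantities $c_{HM}(M)$, $c^{\prime}_{GM}(M)$ and $\tilde c_{GM}(M)$ already coincide, and Proposition \ref{relazione c/ctildeGM} yields $c(M)\le\tilde c_{GM}(M)$, the whole conjecture collapses to the single reverse inequality $\tilde c_{GM}(M)\le c(M)$, i.e.\ to the sharpness of the common upper bound. The plan is therefore to exhibit, for an arbitrary closed connected $M$, a gem $\Gamma$ of $M$ with $c_{GM}(\Gamma)\le c(M)$. First I would reduce to the prime case: Matveev complexity is additive under connected sum, and the graph-connected-sum argument already used in the proof of Lemma \ref{disuguaglianza_I} shows that $\tilde c_{GM}$ is subadditive, so an inequality established on prime summands propagates to $M$; the finitely many manifolds of complexity $0$ (namely $\mathbb S^3$, $\mathbb{RP}^3$, $L(3,1)$, $\mathbb S^1\times\mathbb S^2$ and $\mathbb S^1\tilde\times\mathbb S^2$) are disposed of directly, since each admits a crystallization with $c_{GM}=0$.

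For a prime $M$ I would then start from a complexity-realizing object: a minimal almost simple spine $S$ with exactly $c(M)$ true vertices (equivalently, for prime $M$, a minimal one-vertex pseudo-triangulation). The strategy is to convert $S$ into a generalized Heegaard diagram $H=(\Sigma_g,\mathcal C',\mathcal C'')$ of $M$ whose associated almost simple spine --- the one produced by Matveev's construction recalled in the Introduction, namely $\Sigma_g$ with the meridian disks of $\mathcal C'\cup\mathcal C''$ attached and one region removed --- has true-vertex count equal to $c(M)$. Since the true vertices of that spine are precisely the singular vertices of $\Delta(H)$ lying outside the deleted region, this would give $\widetilde c(H)\le c(M)$, hence $c_{HM}(M)\le c(M)$. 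The converse construction of \cite[Lemma 4]{[G$_2$]}, invoked in Lemma \ref{disuguaglianza_I}, then realizes $H$ as $\mathcal H_{\alpha,\beta,\mathcal D,\mathcal D'}$ for a suitable gem $\Gamma$, so that $\tilde c_{GM}(M)\le c_{GM}(\Gamma)\le c(M)$, closing the chain.

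The hard part is exactly this conversion. A general minimal almost simple spine is not of Heegaard type: there is no reason a priori for its true vertices to appear as the transverse intersections of two systems of curves on a single Heegaard surface with one region suppressed. The crux is therefore the structural claim that every closed $3$-manifold carries a minimal special spine that already \emph{is} (or can be deformed into) such a Heegaard spine without increasing the number of true vertices. I would attack this through Matveev's calculus of special spines, trying to show that the standard moves (lune moves, bubble/$T$-moves and the Matveev--Piergallini moves) suffice to drive an arbitrary minimal special spine to Heegaard form while never raising the true-vertex count; the delicate point is the simultaneous control of the count and of the combinatorics of the two curve systems during these moves, and it is precisely here that I expect the argument to resist a fully general treatment.

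Failing a uniform proof, the realistic fallback is to establish the equality on the families where both a minimal spine and an economical crystallization are explicitly available --- lens spaces, Seifert fibred and graph manifolds, and the branched-covering families of \cite{[C$_5$]} and \cite{[CMV]} --- thereby extending the experimental verification of \cite{[C$_2$]} and \cite{[BCrG$_1$]} beyond the present census ranges. An alternative, and in my view harder, route would be to match $\tilde c_{GM}(M)$ from below by sharpening the known lower bounds (the homological bound of \cite[Theorem 2.6.2]{[M$_2$]} and the hyperbolic volume bound), but these are intrinsically class-dependent and unlikely to yield the exact value in full generality.
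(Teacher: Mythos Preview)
The statement is labelled a \emph{Conjecture} in the paper and is not proved there; the authors advance it solely on the strength of the experimental verifications of \cite{[C$_2$]} and \cite{[BCrG$_1$]}. There is therefore no proof in the paper to compare your attempt against.

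Your reductions are sound: Proposition~\ref{uguaglianza} together with Proposition~\ref{relazione c/ctildeGM} does collapse the four-way equality to the single reverse inequality $\tilde c_{GM}(M)\le c(M)$, and the passage to prime summands via additivity of $c$ and subadditivity of $\tilde c_{GM}$ (using the graph-connected-sum step from the proof of Lemma~\ref{disuguaglianza_I}) is legitimate. But what follows is not a proof, and you say so yourself. The assertion that every prime closed $3$-manifold admits a minimal special spine which is already of Heegaard type, or can be driven to one by Matveev--Piergallini moves without raising the true-vertex count, is precisely the content of the conjecture rephrased in spine language; no argument is supplied for it, and none is currently known. The ``delicate point'' you flag is not a technical detail to be filled in later but the entire open problem. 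Your fallback plans --- direct verification on explicit families, or matching from below via the homological and volume bounds --- describe the present state of the art rather than a route around it, which is exactly why the authors formulate the statement as a conjecture rather than a theorem.
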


\bigskip
\bigskip

{\it Acknowledgement.} Work performed under the auspices of the
G.N.S.A.G.A. of I.N.d.A.M. (Italy) and financially supported by
M.I.U.R. of Italy, University of Modena and Reggio Emilia and
University of Bologna, funds for selected research topics.

\bigskip

\end{document}